\theoremstyle{definition}
\theoremstyle{plain}
\newtheorem{te}{Theorem}[section]
\newtheorem{co}[te]{Corollary}
\newtheorem{conjecture}{Conjecture}[section]
\newcommand{\beq}{\begin{eqnarray}}
\newcommand{\eeq}{\end{eqnarray}}
\newcommand{\beqs}{\begin{eqnarray*}}
\newcommand{\eeqs}{\end{eqnarray*}}
\newcommand{\ds}{\displaystyle}
\newcommand{\ABC}{{\rm ABC}}
\title{Remarks on maximum atom-bond connectivity index \\ with given graph parameters }
\author{
    \large \bf Darko Dimitrov$^a$, Barbara Ikica$^b$, Riste {\v S}krekovski$^c$\footnote{Supported by the ARRS Program P1-0383.}
    }
\affil{  \normalsize
    $^a${ Hochschule f\"ur Technik und Wirtschaft Berlin, Germany
    }
    \\E-mail: {\tt darko.dimitrov11@gmail.com}
}
\affil{
    $^b${ Faculty of Mathematics and Physics, University of Ljubljana \& \\
    Institute of Mathematics, Physics, and Mechanics, Ljubljana, Slovenia}
    \\E-mail: {\tt barbara.ikica@fmf.uni-lj.si}
}
\affil{
    $^c${ Faculty of Information Studies, Novo mesto \& \\
    Faculty of Mathematics and Physics, University of Ljubljana \&  \\ Faculty of Mathematics, Natural Sciences and Information Technologies, \\University of Primorska, Koper, Slovenia}
    \\E-mail: {\tt skrekovski@gmail.com}
}
\date{\today}
\begin{document}

\maketitle

\begin{abstract}
The atom-bond connectivity (ABC) index is a degree-based molecular structure descriptor
that can be used for modelling thermodynamic properties of organic chemical compounds.
Motivated by its applicable potential, a series of investigations have been carried out in the past several years.
In this note we first consider graphs with given edge-connectivity that attain the maximum ABC index.
In particular, we give an affirmative answer to the conjecture about the structure of graphs with edge-connectivity equal to one that maximize the ABC index, which was recently raised by Zhang, Yang, Wang and Zhang~\cite{zywz-mabciggp-2016}.
In addition, we provide supporting evidence for another conjecture posed by the same authors which concerns graphs that maximize the ABC index among all graphs with chromatic number equal to some fixed $\chi \geq 3$. Specifically, we confirm this conjecture in the case where the order of the graph is divisible by $\chi$.
\end{abstract}
%
%
%
%
%
%
%
%
\section[Introduction]{Introduction}

Let $G$ be a simple undirected graph of order $n=|V(G)|$ and size $m=|E(G)|$.
For $v \in V(G)$, let $d_G(v)$ denote the degree of $v$, that is, the number of edges incident
to $v$.
The {\em atom-bond connectivity (ABC) index} is defined as
\beq \label{eqn:001}
\ABC(G)=\sum_{uv\in E(G)} f(d(u), d(v)), \nonumber
\eeq
where  $\ds f(d(u), d(v)) =\sqrt{\frac{d(u) +d(v)-2}{d(u)d(v)}}$.

The ABC index was introduced in 1998 by Estrada, Torres, Rodr{\' i}guez and Gutman~\cite{etrg-abc-98}, who
showed that this index correlates well with the heats of formation of alkanes and can therefore serve the purpose of predicting their thermodynamic properties.
Various physico-chemical applications of the ABC index were presented in a few other works, including~\cite{e-abceba-08, gtrm-abcica-12}.
These results triggered a number of mathematical and computational investigations into the ABC index and its extension~\cite{adgh-dctmabci-14, bcpt-nubfta-16,
cg-eabcig-11, clg-subabcig-12, d-abcig-10, dt-cbfgaiabci-10,
dmga-cbabcig-16, d-ectmabci-2013, d-sptmabci-2014, d-sptmabci-2-2015, ddf-sptmabci-3-2016,  dis-rggibg-2016, df-ftmabc-2015,
dgf-abci-11, f-abcvgga-2016, fgiv-cstmabci-12, gfahsz-abcic-2013, gmng-abcitfnl-15, gs-tsaiotwnpv-16, hag-ktmabci-14, lccglc-fcstmaibtds-14,
lmccgc-pstmabci-15,  lcmzczj-otwmaiatwgnol-16, p-rubabci-14, vh-mabcict-2012, xzd-frabcit-2010,xzd-abcicg-2011}. For recent advances in chemical graph theory, one may consult the following two surveys \cite{kst-maowi-2016, aks-maof-2016}.

Recently, in~\cite{zywz-mabciggp-2016} the maximum ABC index across all connected graphs of a given order, with a
fixed independence number, number of pendant vertices, chromatic number or edge-connectivity was considered.
There, among other results, an upper bound and a characterization of graphs with a given edge-connectivity $\geq 2$ that attain the maximum ABC index was given. Before we state this result, we need the following definitions:

\begin{itemize}
\item Given two disjoint graphs $G$ and $H$ with disjoint vertex sets $V(G)$ and $V(H)$ and disjoint edge sets $E(G)$ and $E(H)$, the \emph{disjoint union} of $G$ and $H$ is the graph
$G + H$ with vertex set $V(G) \cup V(H)$ and edge set $E(G) \cup E(H)$.

\item The \emph{ join} of two simple undirected graphs $G$ and $H$ is the graph $G \vee H$ with vertex set $V(G \vee H) = V (G)\cup V (H)$ and edge set $E(G \vee H)=E(G)\cup E(H)\cup \left\{uv : u \in V(G), \; v \in V(H) \right\}$.

\item The \emph{chromatic number} $\chi(G)$ of a graph $G$ is the smallest number of colours needed to colour the vertices of $G$ in such a way that no two adjacent vertices are coloured the same.
\end{itemize}

Let $K_n(k)$ denote the graph $K_k \vee (K_1+K_{n-k-1})$.
Observe that the graph $K_n(k)$ is simply a graph obtained by joining one vertex to $k$ vertices in $K_{n-1}$.
An illustration of  $K_6(3)$ is depicted in Figure~\ref{fig-K63}.

\begin{figure}[h]
\begin{center}
\includegraphics[scale=1.0]{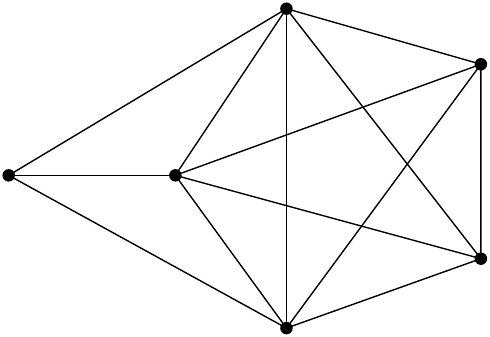}
\caption{ The graph $K_6(3)$.} \label{fig-K63}
\end{center}
\end{figure}

Let $T_{n,l}$ denote a complete $l$-partite graph of order $n$ with $|t_i-t_j| \leq 1$, where $t_i$, $i = 1, 2, \dots, l$, is the number of vertices in the $i$-th partition set of $T_{n,l}$.

The following result about graphs with prescribed edge-connectivity that attain the maximum ABC index was
obtained by Zhang, Yang, Wang and Zhang in~\cite{zywz-mabciggp-2016}.

\begin{te} \label{thm-edge-conn-org}
Let $G$ be a connected graph on $n \geq 6$ vertices with edge-connectivity
$k \geq 2$. Then,
\begin{eqnarray} 
\ABC(G) & \leq &k \sqrt{\frac{n+k-3}{k(n-1)}} + \frac{k(k-1)}{2(n-1)}\sqrt{2n-4}
+\frac{(n-k-1)(n-k-2)}{2(n-2)}\sqrt{2n-6} + \nonumber \\
&  & k(n-k-1) \sqrt{\frac{2n-5}{(n-1)(n-2)}},  \nonumber
\end{eqnarray}
with equality if and only if $G \cong K_n(k)$.
\end{te}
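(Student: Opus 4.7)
The plan is to identify the structure of an extremal graph by studying its minimum edge cut and then to compute the ABC index explicitly.

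First I would take an $\ABC$-maximizer $G$ and fix a minimum edge cut $[A, B]$ of size $k$, assuming without loss of generality that $a := |A| \leq \lfloor n/2 \rfloor$. The central structural claim is that, in the extremal graph, $G[A]$ and $G[B]$ are cliques and the minimum cut isolates a single vertex, i.e., $a = 1$. For the first part, the key observation is that adding an edge within $A$ or within $B$ keeps the cut $[A, B]$ at size $k$, hence preserves edge-connectivity exactly at $k$; the content is to show that such an addition strictly increases $\ABC$. This amounts to a local inequality in which the gain $f(d_u + 1, d_v + 1)$ from the newly inserted edge must exceed the aggregate loss from the other edges at $u$ and $v$, and I would prove this by direct estimation exploiting the fact that every vertex has degree at least $k \geq 2$.

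Once $G[A]$ and $G[B]$ are complete, $G$ is determined by $a$ and the distribution of the $k$ cut edges between $A$ and $B$. I would then write $\ABC(G)$ as a function of $a$, $n$, $k$ and the endpoint multiplicities of the cut edges, and bound it above by its $a = 1$ value. When $a = 1$, the graph must be $K_n(k)$: the unique vertex in $A$ has degree $k$, its $k$ neighbours in $B$ form part of a $K_{n-1}$ and have degree $n - 1$, and the remaining $n - k - 1$ vertices in $B$ have degree $n - 2$. The four edge-class contributions---$k$ edges of type $(k, n-1)$, $\binom{k}{2}$ of type $(n-1, n-1)$, $k(n-k-1)$ of type $(n-1, n-2)$, and $\binom{n-k-1}{2}$ of type $(n-2, n-2)$---then give the four terms in the stated bound, with equality characterising $K_n(k)$.

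The main obstacle is the non-monotonicity of $\ABC$ under edge additions: because $f(x, y)$ is decreasing in each of its arguments on the relevant range, inserting a new edge into a dense subgraph penalises the $\ABC$ contributions of the existing incident edges, so the ``complete the two sides'' reduction is not immediate. The resulting local inequality is elementary but must be checked with care, possibly with a brief case split on the endpoints' current degrees. The secondary difficulty is the comparison across values of $a$: it involves several square-root terms whose algebraic simplification is not transparent. I would try to isolate the dominant term---the contribution of the $\binom{n-a-1}{2}$ edges inside the large clique, which grows as $a$ shrinks---and bound the remainder uniformly, falling back on direct computation in the base case $n = 6$ if necessary.
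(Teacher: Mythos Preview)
The paper does not prove this theorem; it is quoted from Zhang, Yang, Wang and Zhang~\cite{zywz-mabciggp-2016} as background, and the paper's own contribution is the companion case $k=1$ (Theorem~\ref{te-max-abc-connectivity}). So there is no in-paper proof to compare against directly.

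That said, your plan contains a genuine misconception. You call the monotonicity of $\ABC$ under edge addition ``the main obstacle'' and propose to establish it by a delicate local inequality; in fact this monotonicity is a standard result, cited in the paper immediately before Corollary~\ref{corollary-Kn}: adding an edge to any graph strictly increases its $\ABC$ index~\cite{dgf-abci-11, cg-eabcig-11}. Hence the reduction to $G[A]$ and $G[B]$ complete is immediate and requires no case analysis. Your worry that the coordinatewise decrease of $f(x,y)$ might defeat global monotonicity is unfounded; the gain from the new edge always dominates the aggregate loss on the incident edges.

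For your ``secondary difficulty'' (comparing across values of $a$), the plan is vague. The closest analogue in the paper is the proof of Theorem~\ref{te-max-abc-connectivity} for $k=1$, and there the step is handled cleanly rather than by an ad hoc dominant-term estimate: the contribution of one side together with its cut-endpoint edges is packaged into a single function $s(z)=\binom{z}{2}f(z,z)+z\,f(z,z+1)$, convexity of $s$ is verified by computing $s''$, and Karamata's inequality (Theorem~\ref{thm:karamata}) then shows that moving a vertex from the smaller side to the larger one cannot decrease $\ABC$, while a separate elementary check on the bridge term gives strict increase. An analogous convexity-plus-majorization device, suitably adapted to account for the distribution of the $k$ cut edges, is what one would expect to make the $k\geq 2$ comparison tractable; your sketch does not yet supply this.
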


In~\cite{zywz-mabciggp-2016} it was conjectured that
Theorem~\ref{thm-edge-conn-org} also holds for $k=1$.
In addition, in the same paper, the following theorem and a conjecture related to it were posed.

\begin{te} \label{thm-chrom-org}
For any connected graph G of order n with chromatic number $\chi=2$:
\begin{itemize}
\item if $n$ is even, then $\ABC(G) \leq \frac{n}{2} \sqrt{n-2}$ with equality if and only if
$G \cong T_{n,2}$;
\item if $n$ is odd, then $\ABC(G) \leq \frac{1}{2} \sqrt{(n-2)(n^2-1)}$ with equality if and only if
$G \cong T_{n,2}$.
\end{itemize}
\end{te}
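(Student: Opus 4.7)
Since $\chi(G)=2$, $G$ is bipartite. Let $(A,B)$ be a bipartition with $|A|=a$, $|B|=b$, so that $a+b=n$. Connectivity forces every vertex to have positive degree, and the bipartite structure yields $d(u)\leq b$ for every $u\in A$ and $d(v)\leq a$ for every $v\in B$. My plan is to combine a single Cauchy--Schwarz step with these degree caps to reduce $\ABC(G)$ to a one-variable quadratic in $m=|E(G)|$, and then to optimize over $m$ and over the bipartition sizes.

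From Cauchy--Schwarz and the standard identity $\sum_{uv\in E(G)}\bigl(1/d(u)+1/d(v)\bigr)=n$ (valid as soon as no vertex is isolated), one obtains
\[
\ABC(G)^2 \;\leq\; m\sum_{uv\in E(G)} \frac{d(u)+d(v)-2}{d(u)d(v)} \;=\; m(n-2S), \qquad S=\sum_{uv\in E(G)}\frac{1}{d(u)d(v)}.
\]
The pointwise bound $d(u)d(v)\leq ab$ on every edge then gives $S\geq m/(ab)$, and hence
\[
\ABC(G)^2 \;\leq\; mn-\frac{2m^2}{ab}.
\]
The right-hand side is a concave parabola in $m$; for $n\geq 4$ its unconstrained maximum $m^{*}=abn/4$ already exceeds $ab$, so the parabola is non-decreasing on the admissible range $[0,ab]$ and is bounded by its value $ab(n-2)$ at $m=ab$. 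Together with the elementary optimization $ab\leq\lfloor n/2\rfloor\lceil n/2\rceil$ subject to $a+b=n$, this reproduces the two closed-form bounds for the even and odd parities of $n$. The small cases $n\leq 3$ are handled by direct inspection.

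For the equality part I would trace the chain: Cauchy--Schwarz is sharp iff $f(d(u),d(v))$ is constant over $E(G)$; the estimate $S\geq m/(ab)$ is sharp iff $d(u)d(v)=ab$ on every edge, which, using connectivity, forces $d(u)=b$ for every $u\in A$ and $d(v)=a$ for every $v\in B$, hence $G\cong K_{a,b}$; the quadratic bound is sharp iff $m=ab$, which is consistent with this conclusion; and the bipartition optimization is sharp iff $\{a,b\}=\{\lfloor n/2\rfloor,\lceil n/2\rceil\}$. All four tightness conditions are simultaneously realized only by $T_{n,2}$, yielding the ``if and only if'' statement. The main subtlety is simply to verify that these conditions are jointly compatible at $T_{n,2}$, which is a routine check.
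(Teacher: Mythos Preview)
Your argument is correct, but there is nothing in the paper to compare it to: Theorem~\ref{thm-chrom-org} is not proved here. It is quoted from \cite{zywz-mabciggp-2016} as background, and even in the proof of the paper's own Theorem~\ref{thm2-chromatic} the case $\chi=2$ is explicitly deferred back to that reference.

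For context, the paper's method for the related Theorem~\ref{thm2-chromatic} (the case $\chi\mid n$ with $\chi\ge 3$) is also a Cauchy--Schwarz argument, but organized differently: one first replaces $G$ by the complete $\chi$-partite graph on the given colour classes (using that adding edges increases $\ABC$), and then applies Cauchy--Schwarz to the $\binom{\chi}{2}$-dimensional vectors $x_{ij}=t_it_j/\sqrt{(n-t_i)(n-t_j)}$ and $y_{ij}=\sqrt{2n-t_i-t_j-2}$. That decomposition genuinely needs $\chi\ge 3$, because the equality analysis picks three distinct colour indices; for $\chi=2$ the vectors are one-dimensional and the inequality is vacuous. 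Your route---Cauchy--Schwarz over the edge set, the identity $\sum_{uv}\bigl(1/d(u)+1/d(v)\bigr)=n$, the pointwise cap $d(u)d(v)\le ab$, and then the one-variable quadratic in $m$---is more elementary and, unlike the paper's argument, handles both parities of $n$, including the odd case that Theorem~\ref{thm2-chromatic} cannot reach. The equality trace you outline is sound: $d(u)d(v)=ab$ on every edge together with connectivity forces $K_{a,b}$, and the remaining optimizations pin down $\{a,b\}=\{\lfloor n/2\rfloor,\lceil n/2\rceil\}$.
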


\begin{conjecture}\label{conj-chrom}
Let $G$ be an $n$-vertex connected graph with chromatic number $\chi \geq 3$. Then,
    \begin{equation*}
        \ABC(G) \leq \ABC(T_{n,\chi}),
    \end{equation*}
with equality if and only if $G \cong T_{n,\chi}$.
\end{conjecture}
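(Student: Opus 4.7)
The plan is to combine an edge-by-edge Cauchy--Schwarz with Tur\'an's theorem and the Motzkin--Straus theorem, exploiting the fact that when $\chi$ divides $n$ the Tur\'an graph $T_{n,\chi}$ is regular. Writing $D := n(\chi-1)/\chi$ for its common vertex degree, a direct computation gives
\begin{equation*}
\ABC(T_{n,\chi}) = e(T_{n,\chi})\cdot f(D,D) = \frac{n}{2}\sqrt{2(D-1)},
\end{equation*}
so the task reduces to showing $\ABC(G)^2 \leq n^2(D-1)/2$, with equality only for $G \cong T_{n,\chi}$.

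The first step is the Cauchy--Schwarz inequality applied to the edge-sum defining $\ABC(G)$:
\begin{equation*}
\ABC(G)^2 \leq e(G) \sum_{uv \in E(G)} \frac{d(u)+d(v)-2}{d(u)d(v)} = e(G)\bigl(n - 2R^*(G)\bigr),
\end{equation*}
where $R^*(G) := \sum_{uv \in E(G)} 1/(d(u)d(v))$ and we use the identity $\sum_{uv \in E}(1/d(u)+1/d(v)) = n$ (valid since $G$ is connected). Because $\chi(G) = \chi$ forces $G$ to be $K_{\chi+1}$-free, Tur\'an's theorem gives $e(G) \leq e(T_{n,\chi}) = nD/2$, with equality if and only if $G \cong T_{n,\chi}$ (this is where $\chi \mid n$ enters).

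A second Cauchy--Schwarz gives $R^*(G) \geq e(G)^2/M_2(G)$, with $M_2(G) := \sum_{uv \in E} d(u)d(v)$ the second Zagreb index, so matters reduce to the Tur\'an-type Zagreb inequality
\begin{equation*}
M_2(G) \leq \frac{2(\chi-1)}{\chi}\,e(G)^2
\end{equation*}
for $K_{\chi+1}$-free graphs. This is precisely what the Motzkin--Straus theorem yields upon setting $x_v := d(v)/(2e(G))$: since $\sum_v x_v = 1$ and $G$ contains no $K_{\chi+1}$, one obtains $M_2(G)/(4e(G)^2) = \sum_{uv \in E} x_u x_v \leq \tfrac{1}{2}(1 - 1/\chi)$. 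Consequently $R^*(G) \geq \chi/(2(\chi-1)) = n/(2D)$, so $n - 2R^*(G) \leq n(D-1)/D$.

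Chaining the three bounds produces
\begin{equation*}
\ABC(G)^2 \leq e(G)\cdot\frac{n(D-1)}{D} \leq \frac{nD}{2}\cdot\frac{n(D-1)}{D} = \frac{n^2(D-1)}{2} = \ABC(T_{n,\chi})^2,
\end{equation*}
which is the required bound. The main obstacle I anticipate is the equality analysis: the joint tightness of Cauchy--Schwarz, Motzkin--Straus, and Tur\'an must be shown to force $G \cong T_{n,\chi}$, and not merely some other regular or extremal graph. The hypothesis $\chi \mid n$ is essential here, since it is exactly what makes $T_{n,\chi}$ regular so that edge-wise Cauchy--Schwarz is tight on it, while Tur\'an's equality case then pins down $G \cong T_{n,\chi}$ uniquely.
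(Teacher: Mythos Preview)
Your argument is correct and, like the paper, establishes the special case $\chi\mid n$ of the conjecture. The two approaches are genuinely different, however. The paper first reduces to complete $\chi$-partite graphs by adding edges, then applies Cauchy--Schwarz to the $\binom{\chi}{2}$-dimensional vectors $x=(x_{ij})_{i<j}$ and $y=(y_{ij})_{i<j}$ with $x_{ij}=t_it_j/\sqrt{(n-t_i)(n-t_j)}$ and $y_{ij}=\sqrt{2n-t_i-t_j-2}$, and argues that equality in $\langle x,y\rangle\le\|x\|\,\|y\|$ forces all part sizes $t_i$ to coincide. Your route works edge-by-edge rather than colour-class-by-colour-class, and then calls on Tur\'an and Motzkin--Straus to bound $e(G)$ and $M_2(G)$ separately.

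What your approach buys is a completely clean equality analysis, so the obstacle you anticipate is not one. Your chain gives in particular $\ABC(G)^2\le e(G)\cdot n(D-1)/D$, and since $n(D-1)/D>0$ for $\chi\ge 3$, equality with $\ABC(T_{n,\chi})^2=n^2(D-1)/2$ forces $e(G)=nD/2$; the uniqueness clause in Tur\'an's theorem then yields $G\cong T_{n,\chi}$ outright. The simultaneous tightness of the two Cauchy--Schwarz steps and of Motzkin--Straus is automatic because $T_{n,\chi}$ is $D$-regular when $\chi\mid n$. By contrast, the paper's argument is shorter to set up but its equality discussion only identifies when the Cauchy--Schwarz step $\langle x,y\rangle\le\|x\|\,\|y\|$ is tight; one still has to verify separately that the resulting bound $\|x\|\,\|y\|$ is itself maximised over all partitions at the balanced one, a point your chain of classical extremal inequalities sidesteps entirely.
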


Before we affirm the conjecture related to Theorem~\ref{thm-edge-conn-org} and
prove a special case of Conjecture~\ref{conj-chrom},
we state a few auxiliary results.
Namely, adding an edge to a graph strictly increases its ABC index~\cite{dgf-abci-11}
or, equivalently, removing an edge from a graph strictly decreases it~\cite{cg-eabcig-11}.
This has the following immediate consequence.

\begin{co} \label{corollary-Kn}
Among all connected  graphs with $n$ vertices, the maximum value of the $\ABC$ index is attained exactly by the complete graph $K_n$.
\end{co}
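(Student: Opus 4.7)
The plan is to use the edge-addition monotonicity of the ABC index (stated in the paragraph immediately preceding the corollary) as the sole lever. Given an arbitrary connected graph $G$ on $n$ vertices, one observes that $G$ is a subgraph of $K_n$, and that $K_n$ can be obtained from $G$ by adding the missing edges one at a time. Each intermediate graph in this sequence remains on the vertex set $V(G)$, and since adding edges preserves connectivity, every graph in the chain is still connected and has $n$ vertices.

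Concretely, first I would list the non-edges of $G$ as $e_1, e_2, \dots, e_t$ where $t = \binom{n}{2} - |E(G)|$, and define $G_0 = G$ and $G_i = G_{i-1} + e_i$ for $1 \le i \le t$, so that $G_t = K_n$. By the quoted fact that adding an edge strictly increases the ABC index, one has $\mathrm{ABC}(G_{i-1}) < \mathrm{ABC}(G_i)$ for each $i$. Concatenating these $t$ strict inequalities gives $\mathrm{ABC}(G) = \mathrm{ABC}(G_0) \le \mathrm{ABC}(G_t) = \mathrm{ABC}(K_n)$, with equality precisely when $t = 0$, i.e.\ when $G \cong K_n$.

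There is essentially no obstacle here; the corollary is a one-line consequence of monotonicity under edge addition, and the only subtlety worth mentioning is that the intermediate graphs $G_i$ remain connected (which is automatic since they contain $G$ as a spanning subgraph), so they lie in the class over which the maximum is being taken.
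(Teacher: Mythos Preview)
Your proposal is correct and matches the paper's approach exactly: the paper simply states that the corollary is an ``immediate consequence'' of the fact that adding an edge strictly increases the $\ABC$ index, which is precisely the monotonicity argument you spell out. There is nothing to add.
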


The subsequent result plays an important role in the next section, where we prove the first of the aforementioned conjectures.

\begin{te} {\em{[Karamata's inequality]}} \label{thm:karamata}
Let $U \subseteq \mathbb{R}$  be an open interval and $f : U \rightarrow U$ a convex function. Let $a_1 \geq a_2 \geq \cdots \geq a_n$ and $b_1 \geq b_2 \geq \cdots \geq b_n$ be such elements in $U$ that inequalities $a_1 + a_2 + \cdots +a_i  \geq b_1 + b_2 + \cdots +b_i $ hold
for every $i \in \{1, 2, \dots, n \}$ and equality holds for $i = n$. Then, $f(a_1) + f(a_2) + \cdots + f(a_n) \geq f(b_1) + f(b_2) + \cdots + f(b_n )$.
\end{te}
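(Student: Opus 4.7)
The plan is to prove Karamata's inequality by combining the tangent line (subgradient) characterization of convexity with an Abel summation-by-parts argument on the partial sum differences, which are nonnegative by hypothesis.

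First I would record the basic subgradient inequality for a convex function: for any $y\in U$ there exists a real number $c(y)$ (the right-derivative $f'_+(y)$, say) such that
\begin{equation*}
f(x)-f(y) \geq c(y)\,(x-y) \qquad \text{for all } x\in U,
\end{equation*}
and moreover $c(y)$ is nondecreasing in $y$. Applied to $x=a_i$ and $y=b_i$ with $c_i:=c(b_i)=f'_+(b_i)$, this yields
\begin{equation*}
f(a_i)-f(b_i) \geq c_i\,(a_i-b_i), \qquad i=1,\dots,n,
\end{equation*}
and, crucially, since $b_1\geq b_2\geq\cdots\geq b_n$ and $c$ is nondecreasing, we have $c_1\geq c_2\geq\cdots\geq c_n$.

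Next I would perform Abel summation on $\sum_{i=1}^n c_i(a_i-b_i)$. Setting $A_i=a_1+\cdots+a_i$, $B_i=b_1+\cdots+b_i$, $S_i=A_i-B_i$, and $S_0=0$, the hypothesis gives $S_i\geq 0$ for every $i$ and $S_n=0$. Since $a_i-b_i=S_i-S_{i-1}$,
\begin{equation*}
\sum_{i=1}^{n} c_i(a_i-b_i) = \sum_{i=1}^{n} c_i(S_i-S_{i-1}) = c_n S_n - c_1 S_0 + \sum_{i=1}^{n-1} S_i\,(c_i-c_{i+1}) = \sum_{i=1}^{n-1} S_i\,(c_i-c_{i+1}).
\end{equation*}
Every factor on the right is nonnegative ($S_i\geq 0$ by majorization, $c_i-c_{i+1}\geq 0$ by monotonicity of the right-derivative of a convex function), so the sum is $\geq 0$. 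Combined with the subgradient bound this proves $\sum f(a_i)\geq \sum f(b_i)$.

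The only delicate point is justifying the monotone subgradient selection when $f$ is only assumed convex (not differentiable); this is standard and amounts to using the one-sided derivative $f'_+$, which exists everywhere on an open interval and is monotone nondecreasing. Everything else is a routine summation-by-parts identity. I would expect the equality discussion (not asked for in the statement) to be the only other subtlety, requiring strict convexity or a tie-analysis, but for the inequality itself the above two ingredients suffice.
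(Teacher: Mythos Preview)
Your argument is correct and is the standard textbook proof of Karamata's inequality: the subgradient (tangent-line) bound $f(a_i)-f(b_i)\ge c_i(a_i-b_i)$ with $c_i=f'_+(b_i)$ monotone, followed by Abel summation against the nonnegative partial-sum differences $S_i$, is exactly the classical route.

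There is nothing to compare against, however: the paper does not supply its own proof of this theorem. It is quoted as a named auxiliary result (Karamata's inequality) and used later as a black box in the proof of Theorem~\ref{te-max-abc-connectivity}. So your proposal fills in a proof that the authors deliberately omitted as well known.
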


\section[Graphs with edge-connectivity one that maximize the ABC index]{Graphs with edge-connectivity one that maximize the ABC index}

\begin{te} \label{te-max-abc-connectivity}
Let $G$ be a graph with the maximum atom-bond connectivity index among all graphs with $n$ vertices and edge-connectivity $1$.
Then, $G \cong K_n(1)$.
\end{te}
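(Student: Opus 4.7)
The plan is to first use edge-addition to reduce to a one-parameter family of graphs, and then to show that the ABC index of that family is maximized at the correct parameter. Concretely, let $G$ be an $n$-vertex graph with edge-connectivity $1$ that maximizes the ABC index, and let $e = uv$ be a bridge. Then $G - e$ has components $G_1 \ni u$ of order $k$ and $G_2 \ni v$ of order $n - k$ for some $1 \le k \le n - 1$. Any edge added inside $G_1$ or inside $G_2$ leaves $e$ a bridge (so the edge-connectivity stays $1$) and strictly increases the ABC index by Corollary~\ref{corollary-Kn}. Maximality thus forces $G_1 \cong K_k$ and $G_2 \cong K_{n-k}$, so $G \cong H(n, k)$, where $H(n,k)$ denotes the graph obtained from $K_k \cup K_{n-k}$ by joining one vertex of $K_k$ to one vertex of $K_{n-k}$ by a single edge. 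By the symmetry $H(n, k) \cong H(n, n-k)$, I may assume $k \le n/2$; note that $H(n, 1) = K_n(1)$.

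Setting $A(k) := \ABC(H(n, k))$ and summing over edge types gives
\[
A(k) = \sqrt{\frac{n-2}{k(n-k)}} + p(k) + p(n-k),
\]
where $p(1) := 0$ and, for integer $t \geq 2$,
\[
p(t) := \sqrt{\frac{(t-1)(2t-3)}{t}} + \frac{(t-2)^{3/2}}{\sqrt{2}}
\]
collects the contributions of the edges inside a $K_t$ whose distinguished vertex has its degree shifted by $1$ because of the bridge. It suffices to prove $A(k - 1) > A(k)$ for every $2 \le k \le \lfloor n/2 \rfloor$, since iterating this and using the symmetry $A(k) = A(n-k)$ yields $A(1) > A(k)$ for all $k \in \{2, \ldots, n-2\}$.

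Writing $\Delta p(j) := p(j+1) - p(j)$, one computes
\[
A(k-1) - A(k) = \sqrt{n-2}\left[\frac{1}{\sqrt{(k-1)(n-k+1)}} - \frac{1}{\sqrt{k(n-k)}}\right] + \bigl[\Delta p(n-k) - \Delta p(k-1)\bigr].
\]
The first bracket is strictly positive because $(k-1)(n-k+1) = k(n-k) - (n - 2k + 1) < k(n-k)$ when $k \le n/2$; the second bracket is non-negative (since $n - k \ge k - 1$) provided $\Delta p$ is non-decreasing on $\{1, \ldots, n-2\}$.

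The main obstacle is therefore to establish the discrete convexity of $p$, namely $p(j+1) - 2 p(j) + p(j-1) \ge 0$ for $j \ge 2$. Splitting $p = p_1 + p_2$ with $p_1(t) = \sqrt{(t-1)(2t-3)/t}$ and $p_2(t) = (t-2)^{3/2}/\sqrt{2}$, one finds that $p_1$ is concave (as a function of real $t \ge 3/2$) while $p_2$ is convex and grows like $t^{3/2}$, so the convex summand dominates. I would verify this by bounding the negative second difference of $p_1$ by the positive second difference of $p_2$, handling the small cases $j = 2, 3$ by direct computation. Equivalently, once $p$ is extended to a convex function on a suitable real interval, Karamata's inequality (Theorem~\ref{thm:karamata}) yields $p(1) + p(n-1) \ge p(k) + p(n-k)$ from the fact that $(n-1, 1)$ majorizes $(n-k, k)$.
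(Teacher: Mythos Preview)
Your proposal is correct and follows essentially the same route as the paper. After the identical reduction to $H(n,k)$, the paper writes $\ABC(H(n,k))=s(x-1)+f(x,y)+s(y-1)$ with $s(z)=\binom{z}{2}f(z,z)+zf(z,z+1)$, which is exactly your $p$ shifted by one, $s(z)=p(z+1)$; it then shows $A(k-1)>A(k)$ via the same two ingredients you isolate, $f(x-1,y+1)>f(x,y)$ and convexity of $s$ (proved by computing $s''$ directly rather than splitting $p=p_1+p_2$), and concludes with Karamata just as you suggest at the end.
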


\begin{proof}
Let $e=uv$ be an edge of $G$ whose deletion disconnects $G$ into two connected
components with $x$ and $y=n-x$ vertices, respectively.
Since $G$ maximizes the ABC index, it follows by Corollary~\ref{corollary-Kn} that
these two components must be complete subgraphs $K_x$ and $K_{y}$.
Let $u \in V(K_x)$ and $v \in V(K_{y})$.
Due to symmetry, we may assume that the possible values of $x$  are $1, 2, \dots, \lfloor n/2 \rfloor$.

If $x=1$, then the theorem holds.
If $x=2$, then $K_x$ is comprised of one edge, denoted here by $wu$.
Now, we add edges between $u$ and every vertex of $V(K_{y}) \setminus \{v\}$, thus obtaining a graph $G'$
which is comprised of $K_{1}$ and $K_{n-1}$ and the edge  $wu$ that connects them.
Corollary~\ref{corollary-Kn} implies that $\ABC(G') >\ABC(G)$.

For $x \geq 3$ it holds that
\beq \label{eq-thm-con-10}
\ABC(G) &=& \binom{x-1}{2}f(x-1,x-1) + (x-1)f(x-1,x) + f(x,y)  \nonumber \\
&&+ \binom{y-1}{2}f(y-1,y-1) + (y-1)f(y-1,y). \nonumber
\eeq

Let $w$ be a vertex in $V(K_x)$ different from $u$.
Let $G'$ be a graph obtained from $G$ by deleting all edges adjacent to $w$  and adding edges between $w$ and each vertex in $V(K_y)$.
It follows that
\beq \label{eq-thm-con-20}
\ABC(G') &=& \binom{x-2}{2}f(x-2,x-2) + (x-2)f(x-2,x-1) + f(x-1,y+1)  \nonumber \\
&&+ \binom{y}{2}f(y,y) + yf(y,y+1). \nonumber
\eeq

For $z > 1$, let
\beq 
s(z) = \binom{z}{2}f(z,z) + z f(z,z+1).  \nonumber
\eeq
Next, we show that the function $s(z)$ is convex.
After simplification, the second derivative of $s(z)$ reads

\beq \label{eq-thm-con-30}
s''(z)=\frac{1}{8} \left(\frac{3 \sqrt{2}}{\sqrt{z-1}}+\frac{24}{(z+1)^{5/2} \sqrt{z (2 z-1)}}-\frac{2 (1-2 z (z+2))^2}{(z+1)^{5/2} (z (2 z-1))^{3/2}}\right). \nonumber
\eeq
The expression $24/((z+1)^{5/2} \sqrt{z (2 z-1)})$ is positive for $z > 1$. The remainder is positive as well, which can be shown by comparing the square of the minuend to the square of the subtrahend (notice that both the minuend and the subtrahend are positive). More precisely, after clearing denominators, we need to show that
\begin{equation*}
    18 (z+1)^5 (2z^2 - z)^3 > 4 (1 - 2z^2 - 4z)^4 (z-1)
\end{equation*}
holds for all $z > 1$. Clearly, $(z+1) > (z-1)$ is satisfied, and the inequality $(2z^2 - z) > (1 - 2z^2 - 4z)$ follows immediately from
\begin{equation*}
    2z^2 - z - 1 + 2z^2 + 4z = 4z^2 + 3z - 1 = (4z - 1)(z + 1) > 0.
\end{equation*}
It now suffices to show that $(z+1)^2 > (1 - 2z^2 - 4z)$ holds. Since
\begin{equation*}
    z^2 + 2z + 1 - 1 + 2z^2 + 4z = 3z^2 + 6z > 0,
\end{equation*}
we finally conclude that $s''(z)$ is positive, and thus that $s(z)$ is a convex function.

Consider now the difference $\ABC(G')- \ABC(G)$. It holds that
\beq \label{eq-thm-con-40}
\ABC(G')- \ABC(G)&=& s(x-2) + f(x-1,y+1)+ s(y) \nonumber \\
 && -s(x-1) - f(x,y) - s(y-1)  \nonumber.
\eeq

Recall that the range of values of $x$ is $1, 2, \dots, \lfloor n/2 \rfloor$, which implies  $y \geq x$. Under this constraint, a straightforward verification shows that
\beq \label{eq-thm-con-90}
f(x-1,y+1) >  f(x,y).
\eeq

Since $y > y-1$ and $y +(x-2) = (y-1) + (x-1)$ and the function $s$ is convex, it follows by Theorem~\ref{thm:karamata} that
\beq \label{eq-thm-con-100}
 s(y) + s(x-2) \geq s(y-1) + s(x-1).
\eeq

Inequalities (\ref{eq-thm-con-90}) and (\ref{eq-thm-con-100}) imply the validity of $\ABC(G')- \ABC(G) >0$,
which concludes the proof of the theorem.

\end{proof}

As the class of $k$-vertex-connected graphs is a subclass of the class of $k$-edge-connected graphs where $k \geq 1$, and as $K_n(k)$ is a $k$-vertex-connected graph, we may conclude from Theorems~\ref{thm-edge-conn-org}~and~\ref{te-max-abc-connectivity} that a graph that maximizes the $\ABC$ index among all $k$-edge-connected graphs also maximizes the $\ABC$ index among all $k$-vertex-connected graphs.

\begin{co} \label{te-max-abc-vertex-connectivity}
Let $G$ be a graph with the maximum atom-bond connectivity index among all graphs with $n$ vertices and vertex-connectivity $k \geq 1$.
Then, $G \cong K_n(k)$.
\end{co}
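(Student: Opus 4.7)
The plan is to obtain this corollary directly from the two preceding edge-connectivity theorems by a subclass argument, exactly as suggested in the remark just above: combine Whitney's inequality $\kappa(G) \leq \kappa'(G)$ between vertex- and edge-connectivity with the fact that $K_n(k)$ itself is $k$-vertex-connected.

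The first step is to verify that $\kappa(K_n(k)) = k$. Writing $K_n(k) = K_k \vee (K_1 + K_{n-k-1})$, the $k$ vertices of the $K_k$ part form a vertex cut separating the isolated vertex of the $K_1$ summand from the $K_{n-k-1}$ clique, so $\kappa(K_n(k)) \leq k$. Conversely, if $S \subset V(K_n(k))$ has $|S| < k$, then some vertex $w$ of the $K_k$ part avoids $S$ and, by the join construction, is adjacent to every other vertex of $K_n(k)$; hence $K_n(k) - S$ is connected and $\kappa(K_n(k)) \geq k$.

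The second step is the subclass argument. By Whitney's inequality, every $n$-vertex $k$-vertex-connected graph is also $k$-edge-connected, so the family of $n$-vertex $k$-vertex-connected graphs is contained in the family of $n$-vertex $k$-edge-connected graphs. Applying Theorem~\ref{thm-edge-conn-org} (for $k \geq 2$) together with Theorem~\ref{te-max-abc-connectivity} (for $k = 1$), the maximum of the $\ABC$ index over the larger family is attained uniquely at $K_n(k)$. Since $K_n(k)$ belongs to the smaller family as well, it is still the unique maximiser there, which is exactly the statement of the corollary.

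There is no substantive technical obstacle: everything reduces to the two quoted theorems and the routine vertex-connectivity check for $K_n(k)$.
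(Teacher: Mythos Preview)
Your write-up follows the paper's own one-line justification verbatim, and your computation that $\kappa(K_n(k))=k$ is correct. The subclass step, however, does not go through as stated (and the paper's remark is equally loose on this point). Whitney's inequality $\kappa(G)\le\kappa'(G)$ tells you only that a graph with vertex-connectivity \emph{exactly} $k$ has edge-connectivity \emph{at least} $k$; hence $\{G:\kappa(G)=k\}\subseteq\{G:\kappa'(G)\ge k\}$, but not $\{G:\kappa(G)=k\}\subseteq\{G:\kappa'(G)=k\}$. For example, two copies of $K_{t+1}$ glued at a single vertex have $\kappa=1$ yet $\kappa'=t$. Theorems~\ref{thm-edge-conn-org} and~\ref{te-max-abc-connectivity} identify the maximiser only over the class with edge-connectivity \emph{equal to} $k$; over $\{G:\kappa'(G)\ge k\}$ the maximum is $K_n$, not $K_n(k)$. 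So the inclusion you invoke and the theorems you cite do not match, and the sentence ``the maximum of the $\ABC$ index over the larger family is attained uniquely at $K_n(k)$'' is false as written.

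A clean way to close the gap is to argue directly from a minimum vertex cut. If $\kappa(G)=k$, pick a cut $S$ with $|S|=k$ and let $A$, $B$ be the two sides of $G-S$. Adding every missing edge inside $A\cup S$ and inside $B\cup S$ keeps $S$ a cut (so $\kappa$ stays equal to $k$) while strictly increasing the $\ABC$ index, so one may assume $G=K_k\vee(K_a+K_{n-k-a})$. This one-parameter family can then be handled by the same Karamata/convexity argument used in the proof of Theorem~\ref{te-max-abc-connectivity}, yielding the optimum at $a=1$, i.e.\ $G\cong K_n(k)$.
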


\section[Graphs with chromatic number $\chi$ that maximize the ABC index]{Graphs with chromatic number $\chi$ that maximize the ABC index}

In this section, we prove a special case of Conjecture~\ref{conj-chrom}. Specifically, we show that this conjecture holds when $\chi$ divides $n$.

\begin{te} \label{thm2-chromatic}
Let $G$ be an $n$-vertex connected graph with chromatic number $\chi \geq 2$ and suppose that $\chi$ divides $n$. Then,
    \begin{equation*}
        \ABC(G) \leq \ABC(T_{n,\chi}),
    \end{equation*}
with equality if and only if $G \cong T_{n,\chi}$.
\end{te}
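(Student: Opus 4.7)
The approach combines a Tur\'an-type edge-saturation with a vertex-shift balancing argument.

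First, I reduce to complete $\chi$-partite graphs. Given a proper $\chi$-coloring $V_1, \ldots, V_\chi$ of $V(G)$ (each $V_i$ nonempty since $\chi(G) = \chi$), let $G^\star = K_{|V_1|, \ldots, |V_\chi|}$ arise from $G$ by inserting every missing edge with endpoints in distinct color classes. Then $G \subseteq G^\star$ and the given coloring is still a proper $\chi$-coloring of $G^\star$, so $\chi(G^\star) = \chi$. The edge monotonicity of $\ABC$ cited before Corollary~\ref{corollary-Kn} gives $\ABC(G) \leq \ABC(G^\star)$, with equality iff $G = G^\star$. It therefore suffices to assume $G = K_{t_1, \ldots, t_\chi}$ with $t_i \geq 1$ and $\sum t_i = n$.

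Second, I balance the partition. Writing $\ABC(K_{t_1, \ldots, t_\chi}) = \sum_{i<j} t_i t_j f(n - t_i, n - t_j)$, I will show that if $(t_1, \ldots, t_\chi) \neq (n/\chi, \ldots, n/\chi)$, then shifting a single vertex between two suitably chosen parts strictly increases $\ABC$; iterating then drives the partition to $T_{n, \chi}$. Since $\chi \mid n$, any unbalanced partition contains two indices with $t_1 \geq n/\chi + 1$ and $t_2 \leq n/\chi - 1$ after relabeling, so $a := t_1 \geq t_2 + 2 =: b + 2$. Setting $\psi_c(x) := x f(n - x, c)$, the change $\Delta = \ABC(K_{a-1, b+1, t_3, \ldots, t_\chi}) - \ABC(G)$ splits as $\Delta = \Delta_{12} + \sum_{k=3}^\chi t_k \Delta_k$, where
\[
  \Delta_{12} = (a-1)(b+1)\, f(n-a+1, n-b-1) - ab\, f(n-a, n-b)
\]
and $\Delta_k = [\psi_{n-t_k}(a-1) + \psi_{n-t_k}(b+1)] - [\psi_{n-t_k}(a) + \psi_{n-t_k}(b)]$. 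With $p = n - a$ and $q = n - b$, a rationalization of the difference of square roots in $\Delta_{12}$ yields
\[
  \Delta_{12} = \frac{(q-p-1)\sqrt{p+q-2}\,\bigl[n(n-a-b) + \sqrt{pq(p+1)(q-1)}\bigr]}{\sqrt{pq(p+1)(q-1)}\,\bigl(\sqrt{pq} + \sqrt{(p+1)(q-1)}\bigr)} > 0,
\]
since $q \geq p + 2$ and $n - a - b \geq \chi - 2 \geq 0$. For $\chi = 2$ the sum over $k \geq 3$ is empty, so $\Delta = \Delta_{12} > 0$ completes the swap step immediately (and simultaneously reproves Theorem~\ref{thm-chrom-org} for even $n$).

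The main obstacle is the case $\chi \geq 3$: a convexity computation for $\psi_c$ on the relevant domain, analogous to the convexity analysis of $s(z)$ in the proof of Theorem~\ref{te-max-abc-connectivity}, combined with Karamata's inequality (Theorem~\ref{thm:karamata}) applied to the majorization $(a, b) \succ (a-1, b+1)$, shows that each $\Delta_k \leq 0$; hence $\Delta > 0$ does not follow term-by-term, and one must quantitatively dominate $\sum_{k=3}^\chi t_k |\Delta_k|$ by $\Delta_{12}$. This final estimate, obtained by bounding the forward differences of $\psi_c$ via the mean value theorem applied to $\psi_c'$ and exploiting the additive factor $n(n - a - b)$ inside $\Delta_{12}$, is the technical crux of the proof.
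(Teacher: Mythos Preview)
Your reduction to complete $\chi$-partite graphs via edge monotonicity matches the paper's first step. From there the paper proceeds quite differently: it factors the resulting sum as a scalar product $\langle x,y\rangle$ with $x_{ij}=t_it_j/\sqrt{(n-t_i)(n-t_j)}$ and $y_{ij}=\sqrt{2n-t_i-t_j-2}$, applies the Cauchy--Schwarz inequality $\ABC(G)\le\|x\|\,\|y\|$, notes that $\|y\|^2=(\chi-1)\bigl(\chi(n-1)-n\bigr)$ is independent of the partition, and argues that equality in Cauchy--Schwarz forces all $t_i$ to coincide. No balancing moves, no auxiliary convexity computations, and no Karamata appear in the paper's argument for this theorem.

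Your balancing scheme, by contrast, has a genuine gap precisely at the point you yourself label the ``technical crux''. You show $\Delta_{12}>0$ and assert (without actually verifying the convexity of $\psi_c$) that each $\Delta_k\le 0$, so that $\Delta>0$ cannot be read off term by term. The entire burden of the proof then rests on the quantitative inequality
\[
\Delta_{12}\;>\;\sum_{k\ge 3} t_k\,|\Delta_k|,
\]
which you never establish. Invoking the mean value theorem on $\psi_c'$ and pointing to the additive term $n(n-a-b)$ inside your expression for $\Delta_{12}$ is a sketch of a strategy, not an argument: you supply no explicit upper bound on $|\Delta_k|$, no lower bound on $\Delta_{12}$ sharp enough to absorb a sum weighted by $\sum_{k\ge 3}t_k=n-a-b$, and no indication of why these estimates close uniformly over all admissible $(a,b,t_3,\ldots,t_\chi)$. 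As written, the proposal reduces the theorem to an unproved analytic inequality whose difficulty is comparable to that of the original claim.
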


\begin{proof}
As the case $\chi = 2$ has already been proven in \cite{zywz-mabciggp-2016}, we may assume from now on that $\chi \geq 3$.

Consider a $\chi$-colouring of $G$ and denote by $t_i$ the size of the $i$-th colour class, that is, the number of vertices that are assigned the $i$-th colour for $i = 1, 2, \dots, \chi$.

First of all, since adding an edge to a graph strictly increases its $\ABC$ index, notice that
\begin{equation}
\label{eq:ABCuppBnd}
    \ABC(G) \leq \sum_{i<j} t_i t_j \sqrt{\frac{2n-t_i-t_j-2}{(n-t_i)(n-t_j)}} = \sum_{i<j} \frac{t_i t_j}{\sqrt{n-t_i}\sqrt{n-t_j}} \sqrt{2n-(t_i+t_j)-2}.
\end{equation}
Write $x_{ij}$ and $y_{ij}$ as an abbreviation for the first factor and the second factor on the right-hand side, respectively, i.e.,
\begin{equation*}
    x_{ij} = \frac{t_i t_j}{\sqrt{n-t_i}\sqrt{n-t_j}} \quad \text{and} \quad y_{ij} = \sqrt{2n-(t_i+t_j)-2}.
\end{equation*}

Observe now that the right-hand side of \eqref{eq:ABCuppBnd} is equivalent to the standard scalar product $\langle x, y \rangle$ of vectors $x := (x_{ij})_{i<j}$ and $y := (y_{ij})_{i<j}$ of length ${\chi \choose 2}$. Thus, by the Cauchy-Schwarz inequality,
\begin{equation}
\label{eq:ABC-CS-ineq}
    \ABC(G) \leq | \langle x, y \rangle | \leq \|x\| \|y\|.
\end{equation}

In the following, we show that this upper bound on $\ABC(G)$ is attained exactly whenever $t_i = n/\chi$ for all $i = 1, 2, \dots, \chi$ and every vertex has degree $n - n/\chi$ (this follows from maximizing the number of edges); that is, by $T_{n,\chi}$, as claimed.

Note that the equality $|\langle x, y \rangle | = \|x\| \|y\|$ in \eqref{eq:ABC-CS-ineq} holds if and only if $x$ and $y$ are linearly dependent, that is, if and only if $x = \mu y$ holds for some scalar $\mu \neq 0$.
Hence, if and only if
\begin{equation*}
    \mu = x_{ij}/y_{ij} = \frac{t_i t_j}{\sqrt{n-t_i}\sqrt{n-t_j}} \left( \sqrt{2n-(t_i+t_j)-2} \right)^{-1}
\end{equation*}
for each pair of indices $i < j$.

Take indices $i, j, k$ such that $1 \leq i < j < k \leq \chi$. From $\mu = x_{ij}/y_{ij} = x_{ik}/y_{ik}$ it follows that
\begin{equation*}
    t_j^2 (n - t_k) \big( 2n - (t_i + t_k) -2 \big) = t_k^2 (n - t_j) \big( 2n - (t_i + t_j) - 2 \big).
\end{equation*}
One finds that $t_j = t_k$ must hold. If not, then $t_j > t_k$ (or vice versa), which implies $n - t_k > n - t_j$ and $2n - t_i - t_k - 2 > 2n - t_i - t_j - 2$. Hence, as these terms are all non-zero, the left-hand side is strictly greater than the right-hand side, which leads to a contradiction.

Since $i$, $j$ and $k$ were chosen arbitrarily, this establishes that $t_1 = t_2 = \cdots = t_{\chi}$ and thus, since $\sum_{i=1}^{\chi} t_i = n$, it follows that $t_i = n/\chi$ for all $i = 1, 2, \ldots, \chi$, as desired.
\end{proof}

As a consequence of  Theorem~\ref{thm2-chromatic} we obtain the following corollary.

\begin{co}
Let $G$ be an $n$-vertex connected graph with chromatic number $\chi \geq 2$ and suppose that $\chi$ divides $n$. Then,
    \begin{equation*}
        \ABC(G) \leq  n \sqrt{\frac{\chi(n-1)-n}{2\chi}},
    \end{equation*}
with equality if and only if $G \cong T_{n,\chi}$.
\end{co}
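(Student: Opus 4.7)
The plan is straightforward: combine Theorem~\ref{thm2-chromatic} with an explicit calculation of $\ABC(T_{n,\chi})$. The theorem already delivers $\ABC(G) \leq \ABC(T_{n,\chi})$ together with the desired characterization of equality, so the only remaining task is to evaluate the right-hand side in closed form under the divisibility hypothesis $\chi \mid n$.

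Under that hypothesis, $T_{n,\chi}$ is the complete $\chi$-partite graph whose parts each have cardinality $n/\chi$. It is therefore regular of degree $d := n - n/\chi = n(\chi-1)/\chi$, and its edge count is
\begin{equation*}
    |E(T_{n,\chi})| \;=\; \binom{\chi}{2}\Bigl(\tfrac{n}{\chi}\Bigr)^{2} \;=\; \frac{n^{2}(\chi-1)}{2\chi}.
\end{equation*}
Because every edge of $T_{n,\chi}$ joins two vertices of the common degree $d$, the summand $f(d(u),d(v)) = \sqrt{(2d-2)/d^{2}}$ is constant across $E(T_{n,\chi})$, so
\begin{equation*}
    \ABC(T_{n,\chi}) \;=\; \frac{n^{2}(\chi-1)}{2\chi}\cdot\frac{\sqrt{2d-2}}{d}.
\end{equation*}

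I would then substitute $d = n(\chi-1)/\chi$ and notice that the prefactor collapses, since $n^{2}(\chi-1)/(2\chi d) = n/2$. This reduces the expression to $(n/2)\sqrt{2d-2}$. Writing $2d-2 = 2(n\chi - n - \chi)/\chi$ and absorbing the outer factor $1/2$ into the radical as $4(n\chi - n - \chi)/(2\chi\cdot 4)$ gives
\begin{equation*}
    \ABC(T_{n,\chi}) \;=\; n\sqrt{\frac{n\chi - n - \chi}{2\chi}} \;=\; n\sqrt{\frac{\chi(n-1)-n}{2\chi}},
\end{equation*}
which is exactly the bound claimed in the corollary; the equality case is inherited verbatim from Theorem~\ref{thm2-chromatic}.

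There is no genuine obstacle here. The only care required is in the algebraic tidy-up that matches the form $(n/2)\sqrt{2(n\chi-n-\chi)/\chi}$ with the stated $n\sqrt{(\chi(n-1)-n)/(2\chi)}$; everything else is dictated by the regularity of $T_{n,\chi}$ when $\chi \mid n$.
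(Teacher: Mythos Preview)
Your argument is correct. You invoke Theorem~\ref{thm2-chromatic} for the inequality and the equality case, and then evaluate $\ABC(T_{n,\chi})$ directly by exploiting that $T_{n,\chi}$ is $d$-regular with $d=n(\chi-1)/\chi$ when $\chi\mid n$; the algebra leading to $n\sqrt{(\chi(n-1)-n)/(2\chi)}$ is sound.

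The paper takes a slightly different computational route. Rather than computing $\ABC(T_{n,\chi})$ from scratch, it returns to the Cauchy--Schwarz bound $\|x\|\,\|y\|$ established inside the proof of Theorem~\ref{thm2-chromatic}, evaluates $\|y\|^2=(\chi-1)(\chi(n-1)-n)$ (which is independent of the $t_i$) and $\|x\|^2=n^2/(2\chi(\chi-1))$ at $t_i=n/\chi$, and multiplies. Since Cauchy--Schwarz is tight precisely at $T_{n,\chi}$, this yields the same number. Your approach has the advantage of depending only on the \emph{statement} of Theorem~\ref{thm2-chromatic} rather than on internal quantities from its proof, and it is a touch more elementary; the paper's approach, on the other hand, reuses existing machinery and highlights the pleasant fact that $\|y\|$ does not depend on the partition sizes.
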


\begin{proof}
Let us now compute the upper bound \eqref{eq:ABC-CS-ineq} on $\ABC(G)$. The calculation of $\|y\|$ is straightforward (and its value is independent of $t_i$):
\begin{equation*}
    \|y\|^2 = \sum_{i<j} (2n-(t_i+t_j)-2) = 2n {\chi \choose 2} - (\chi-1) n - 2 {\chi \choose 2} = (\chi-1) \big( \chi(n-1) - n \big).
\end{equation*}
We proceed by evaluating the expression $\|x\|$ at $t_i = n/\chi$:
\begin{equation*}
    \|x\|^2 = \frac{n^2}{2 \chi (\chi-1)}.
\end{equation*}
Thus, combining these statements and plugging them in \eqref{eq:ABC-CS-ineq}, we find that
\begin{equation*}
    \ABC^2(G) \leq \frac{(\chi-1) \big( \chi(n-1) - n \big) n^2}{2\chi(\chi-1)} = \frac{n^2 \big( \chi(n-1) - n \big)}{2\chi}.
\end{equation*}
Notice that this upper bound is attained if and only if $G \cong T_{n,\chi}$, which concludes the proof.
\end{proof}

\section{Concluding comments} \label{sec:conclusion}

In this note we consider graphs with given edge-connectivity that attain the maximum ABC index.
The case where the connectivity is greater or equal to two was solved in \cite{zywz-mabciggp-2016}.
Here we resolve the remaining case where edge-connectivity equals one.
The authors of \cite{zywz-mabciggp-2016} also posed a conjecture about the structure of graphs with chromatic number equal to some fixed $\chi \geq 3$ that maximize the ABC index.
Herein we confirm a special case of this conjecture, more specifically, the case where the order of the graph is divisible by $\chi$.

\end{document}